\newtheorem{te}{Theorem}[section]
\newtheorem{co}[te]{Corollary}
\newtheorem{qu}[te]{Question}
\newtheorem{lemme}[te]{Lemma}
\theoremstyle{definition}
\newtheorem{de}[te]{Definition}
\theoremstyle{remark}
\newtheorem{rque}[te]{Remark}
\newlength{\plarg}
\title{Acylindrical hyperbolicity and existential closedness}
\author{Simon André}
\begin{document}

\begin{minipage}{\linewidth}

\maketitle

\vspace{0mm}

	\begin{abstract}
Let $G$ be a finitely presented group, and let $H$ be a subgroup of $G$. We prove that if $H$ is acylindrically hyperbolic and existentially closed in $G$, then $G$ is acylindrically hyperbolic. As a corollary, any finitely presented group which is existentially equivalent to the mapping class group of a surface of finite type, to $\mathrm{Out}(F_n)$ or $\mathrm{Aut}(F_n)$ for $n\geq 2$ or to the Higman group, is acylindrically hyperbolic.
\end{abstract}
	
\end{minipage}

\thispagestyle{empty}

\vspace{5mm}

\section{Introduction}

Acylindrically hyperbolic groups, defined by Osin in \cite{Osi16}, form a large class of groups that has received a lot of attention in the recent years. Notable examples of acylindrically hyperbolic groups include non-elementary hyperbolic and relatively hyperbolic groups, mapping class groups of most surfaces of finite type, $\mathrm{Out}(F_n)$ and $\mathrm{Aut}(F_n)$ for $n\geq 2$, many groups acting on $\mathrm{CAT}(0)$ spaces, and many other groups. This short note is motivated by the following question, asked by Osin.

\begin{qu}\label{osin}Is acylindrical hyperbolicity preserved under elementary equivalence among finitely generated groups?
\end{qu}

In \cite{Sel09}, Sela proved that hyperbolicity is preserved under elementary equivalence, among torsion-free finitely generated groups, and we proved that the torsion-freeness assumption can be omitted (see \cite{And18}). The question whether there exists an analogous result for weaker forms of negative curvature in groups is natural. Note that unlike hyperbolic groups, acylindrically hyperbolic groups need not be finitely generated, and Question \ref{osin} makes sense even if we don't assume finite generation; however, the answer to this question is negative in general, even among countable groups (see Section \ref{counter} for further details).

\smallskip

Let $G$ be a group, and let $H$ be a subgroup of $G$. One says that $H$ is \emph{existentially closed} in $G$ if the following holds: for every existential formula $\phi(\bm{x})$ and every tuple $\bm{h}\in H^p$ with $p=\vert \bm{x}\vert$, the sentence $\phi(\bm{h})$ is satisfied by $H$ if and only if it is satisfied by $G$. Equivalently, $H$ is existentially closed in $G$ if any disjunction of systems of equations and inequations with constants in $H$ has a solution in $H$ if and only if it has a solution in $G$. We prove the following result.


\begin{te}\label{above}
Let $G$ be a finitely presented group, and let $H$ be a subgroup of $G$. If $H$ is acylindrically hyperbolic and existentially closed in $G$, then $G$ is acylindrically hyperbolic.\end{te}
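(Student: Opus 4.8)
The plan is to combine Osin's structure theory of acylindrically hyperbolic groups with a model-theoretic transfer principle. Recall that a group $\Gamma$ is acylindrically hyperbolic if and only if it admits a \emph{generalised loxodromic element}: an infinite-order element $g$ such that the (unique) maximal virtually cyclic subgroup $E_\Gamma(g)\supseteq\langle g\rangle$ is hyperbolically embedded in $\Gamma$ (Osin; see also Dahmani--Guirardel--Osin). Note first that $G$ is not virtually cyclic: it contains $H$, which is not virtually cyclic since it is acylindrically hyperbolic, and subgroups of virtually cyclic groups are virtually cyclic; moreover any virtually-cyclic-type hyperbolically embedded subgroup we produce in $G$ will then automatically be proper (and it will be infinite, containing an infinite-order element). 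So it suffices to produce a generalised loxodromic element of $G$.

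The key step I would isolate is the claim that \emph{for a finitely presented group, being acylindrically hyperbolic is equivalent to satisfying a first-order sentence of the form $\exists\bm{x}\,\forall\bm{y}\,\varphi(\bm{x},\bm{y})$ with $\varphi$ quantifier-free} -- an ``$\exists\forall$-sentence''. Granting this, the theorem follows: $H$ is acylindrically hyperbolic, so by the easy, ``existential'' direction (which needs only finite generation of the group, to have a finite relative generating set, and proceeds by reading off hyperbolicity, acylindricity and isoperimetric constants from a relative Cayley graph realising $E_H(g)\hookrightarrow_h H$) there is a tuple $\bm{h}\in H^{n}$ with $H\models\forall\bm{y}\,\varphi(\bm{h},\bm{y})$. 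Now $\forall\bm{y}\,\varphi(\bm{h},\bm{y})$ is a universal sentence with constants in $H$; since $H$ is existentially closed in $G$, the pairs $(H,H)$ and $(G,H)$ have the same existential, hence the same universal, theory in the language with a constant for every element of $H$. Therefore $G\models\forall\bm{y}\,\varphi(\bm{h},\bm{y})$, so $G\models\exists\bm{x}\,\forall\bm{y}\,\varphi(\bm{x},\bm{y})$, and as $G$ is finitely presented the ``$\forall$'' direction of the claim gives that $G$ is acylindrically hyperbolic.

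To set up the claim, the tuple $\bm{x}$ should carry: a candidate $g$; finitely many elements generating a candidate for $E_G(g)$ (legitimate, as this group is two-ended, hence finitely generated); a finite relative generating set $X_0$; and, as bounded integer parameters, a hyperbolicity constant, acylindricity constants, and a linear relative-isoperimetric constant. The formula $\varphi(\bm{x},\bm{y})$ then expresses, at the bounded scale dictated by these constants, that $g$ has infinite order and lies in the candidate subgroup, and that the left-multiplication action of $G$ on the relative Cayley graph $\mathrm{Cay}(G,X_0\sqcup\langle\bm{x}\rangle)$ is hyperbolic, acylindrical and has $g$ loxodromic, with the prescribed constants. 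The point of finite presentation is that, with finitely many defining relations, these geometric conditions should be detected by van Kampen diagrams and acylindricity witnesses of \emph{bounded} size, so that ``$\forall\bm{y}\,[\dots\text{at scale }N\dots]$'' is a genuine first-order condition; a local-to-global principle then upgrades such a bounded certificate to an honest hyperbolic embedding $E_G(g)\hookrightarrow_h G$, and conversely an acylindrically hyperbolic finitely presented group visibly supplies such a certificate.

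The main obstacle is exactly this passage from the \emph{a priori} unbounded conditions defining a hyperbolically embedded subgroup -- thinness of \emph{all} geodesic triangles in the relative Cayley graph, and finiteness of \emph{all} balls of the relative metric on $E_G(g)$ -- to \emph{bounded}, quantifier-free ones, together with the purely syntactic problem of encoding them as first-order formulas (expressing membership in and the relative metric on $E_G(g)$, which naively involve quantification over word-length and over integer exponents, is delicate; one expects that at a bounded scale only bounded-length data ever intervenes, which is precisely where the finite-presentation hypothesis bites). I would attack the local-to-global step by a combinatorial Gauss--Bonnet / linear-isoperimetric argument for relative presentations, in the style of Osin's and Dahmani--Guirardel--Osin's work on relatively hyperbolic groups, reinforced by a compactness (non-principal ultralimit) argument: an infinite family of obstructions of unbounded size would limit to an action of a limit group over $G$ on an $\mathbb{R}$-tree, contradicting the acylindricity encoded by the chosen constants. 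A secondary point requiring care is that $H$ is only assumed acylindrically hyperbolic and existentially closed in $G$, not finitely generated; one must check the ``existential'' direction can still be run over $H$ -- presumably after passing to a finitely generated acylindrically hyperbolic subgroup containing the relevant elements (via the classification of subgroups of acylindrically hyperbolic groups) and verifying that the resulting universal sentence remains valid in $H$ itself.
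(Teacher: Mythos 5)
Your transfer step is sound as far as it goes: universal formulas with constants in $H$ do pass from $H$ up to $G$ when $H$ is existentially closed in $G$. But the entire content of the theorem has been pushed into your ``key claim'' that acylindrical hyperbolicity of a finitely presented group is equivalent to a single $\exists\forall$-sentence, and this cannot simply be granted: it is unproven, it is at least as hard as the theorem itself, and it would immediately imply that acylindrical hyperbolicity is preserved under elementary (indeed $\exists\forall$-) equivalence among finitely presented groups, i.e.\ it would settle Osin's Question \ref{osin} in the finitely presented case --- something neither this paper nor the literature achieves (even for hyperbolicity, Sela's preservation theorem is not proved by exhibiting a defining sentence, but by limit-group techniques). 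Concretely, two of the conditions you must encode resist first-order expression at any bounded scale: (i) properness of the relative metric $\widehat d$ on $E(g)$ says that every $\widehat d$-ball is \emph{finite}, which is not first-order without a uniform bound that you do not have; (ii) hyperbolicity of $\mathrm{Cay}(G,X_0\sqcup E(g))$ concerns arbitrarily large triangles in a graph that is \emph{not locally finite}, so the Gromov--Papasoglu local-to-global principle you invoke (the one that makes hyperbolicity semi-decidable for finitely presented groups) is not available. There is also a structural obstruction to the transfer even granting definability: a certificate that $G$ is acylindrically hyperbolic must contain a relative generating set $X_0$ with $\langle X_0\cup E(g)\rangle=G$, and no such witness lives inside $H$; symmetrically, since $H$ need not be finitely generated, the certificate you produce on the $H$-side only certifies a finitely generated subgroup of $H$, and acylindrical hyperbolicity does not pass to overgroups (e.g.\ $F_2\leq F_2\times\mathbb{Z}$), so the universal clause verified over that subgroup says nothing over $H$, let alone over $G$.

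The paper's route avoids definability altogether. Existential closedness plus finite presentability of $G$ yields a discriminating sequence of epimorphisms $\varphi_n:G\twoheadrightarrow H$ fixing two independent loxodromics $h_1,h_2$ of an acylindrical non-elementary action of $H$ on a hyperbolic space $X$ (Lemma \ref{lemme1}); one then passes to an ultralimit. If the scaling factors diverge, $G$ acts non-trivially on a real tree and the Groves--Hull Rips machine produces a splitting of $G$ over a virtually abelian subgroup which is shown to be weakly malnormal, whence acylindrical hyperbolicity by Minasyan--Osin; if they stay bounded, $G$ acts acylindrically on the unrescaled ultraproduct of $X$, and the action is non-elementary because $h_1$ and $h_2$ survive. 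You would need to either prove your definability claim (a substantial open-ended project) or switch to an argument of this limiting type.
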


A group $G$ is called \emph{equationally Noetherian} if the set of solutions in $G$ of any infinite system of equations in finitely many variables $\Sigma$ coincides with the set of solutions in $G$ of a finite subsystem of $\Sigma$. For instance, hyperbolic groups are equationally Noetherian (see \cite{Sel09} and \cite{RW14}). By contrast, acylindrically hyperbolic groups are not equationally Noetherian in general: given a group $H$ that is not equationally Noetherian, the free product $H\ast\mathbb{Z}$ is acylindrically hyperbolic and is not equationally Noetherian (since the equational Noetherian property is inherited by subgroups). We prove the following variant of Theorem \ref{above}.

\begin{te}\label{above2}
Let $G$ be a finitely generated group, and let $H$ be a subgroup of $G$. If $H$ is acylindrically hyperbolic, equationally Noetherian and existentially closed in $G$, then $G$ is acylindrically hyperbolic.
\end{te}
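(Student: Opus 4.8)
The plan is to reduce everything, via the model theory, to the existence in $G$ of a \emph{generalized loxodromic element} (an element that is loxodromic for some acylindrical action of $G$ on a hyperbolic space), which by Osin's characterization — together with the easy fact that $G$ is not virtually cyclic, since its subgroup $H$ is not — forces $G$ to be acylindrically hyperbolic. (Here $G$ cannot be virtually cyclic because a subgroup of a virtually cyclic group is virtually cyclic, while $H$ is acylindrically hyperbolic.) First I would extract the genuine content of the hypotheses as follows. Since $H$ is existentially closed in $G$, the pair $(G,H)$ embeds over $H$ into an ultrapower $H^{\mathcal{U}}$; writing the images of a finite generating set of $G$ as sequences of elements of $H$ yields maps $G\to H$ that are homomorphisms ``up to $\mathcal{U}$''. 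To promote these to genuine homomorphisms one must kill every relator of $G$, and here is where I would use that $H$ is equationally Noetherian in place of the finite presentability of $G$ used in Theorem~\ref{above}: the relators of $G$, read as a system of equations over $H$, are equivalent over $H$ to a finite subsystem, so for $\mathcal{U}$-almost all $n$ the approximating map already kills all relators and hence factors through $G$. Passing to a subsequence, I obtain homomorphisms $\phi_n\colon G\to H$ that restrict to the identity on a prescribed finitely generated subgroup of $H$ and are stably injective: $\phi_n(g)\neq 1$ for all large $n$, for each $g\in G\setminus\{1\}$.

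Next, since $H$ is acylindrically hyperbolic, I would invoke the theory of Dahmani--Guirardel--Osin to fix a generalized loxodromic element $g\in H$ with a workable description: $g$ has infinite order, the maximal elementary subgroup $E_H(g)=\{t\in H:\ tg^{n}t^{-1}=g^{\pm n}\ \text{for some}\ n\geq 1\}$ is virtually cyclic, $\langle g\rangle$ is Morse in $H$, and, after replacing $g$ by a suitable power, $g$ together with finitely many of its conjugates satisfies a small-cancellation condition. I would arrange that each $\phi_n$ fixes $g$ and this finite family of auxiliary elements, and then show that $g$, now regarded inside $G$, is still generalized loxodromic in $G$. The mechanism: ``$g$ is generalized loxodromic'' should be equivalent to the \emph{absence}, in the ambient finitely generated group, of certain finite ``obstructing configurations'' — an element $t$ with $[t,g]=1$ or $tg^{k}t^{-1}=g^{\ell}$ outside what the elementarity and small-cancellation relations already permit, a long coincidence between powers of $g$ and of a conjugate, and so on — each being a bounded system of equations together with a bounded amount of ``non-collapsing'' data. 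If such a configuration existed in $G$, then applying $\phi_n$ — which fixes $g$ and the auxiliary elements, preserves equations, and by stable injectivity does not degenerate the configuration once $n$ is large — would transport it into $H$, contradicting that $g$ is generalized loxodromic there. Hence $G$ possesses a generalized loxodromic element and is acylindrically hyperbolic.

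The hard part will be precisely the step invoked in the previous paragraph: turning the geometric and a priori infinitary condition ``$g$ is loxodromic and satisfies weak proper discontinuity'' into a finite list of algebraic configurations whose non-occurrence is (i) robust under the non-injective homomorphisms $\phi_n$ — which is where one must be careful that the ``non-collapsing'' side-conditions are genuinely finite, possibly again drawing on equational Noetherianity of $H$ to bound the relevant exponents — and (ii) not merely necessary but also sufficient for $g$ to be generalized loxodromic in a finitely generated group. This amounts to re-packaging the small-cancellation / very-rotating-family machinery of Dahmani--Guirardel--Osin so that it speaks about solvability of equation systems rather than about geometry of actions, and it is the common technical core of Theorems~\ref{above} and~\ref{above2}. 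Equivalently, one can phrase this core as the assertion that acylindrical hyperbolicity of a finitely generated group is certified by finitely many elements satisfying finitely many equations and inequations together with the unsolvability of one auxiliary system of equations with coefficients among those elements; the finite algebraic data and the unsolvable system are then both inherited from $H$, the latter being transported to $G$ by first shrinking it to a finite subsystem (equational Noetherianity of $H$) and then using that $H$ is existentially closed in $G$.
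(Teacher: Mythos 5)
Your first step --- using that $H$ is existentially closed and equationally Noetherian to produce a discriminating sequence of homomorphisms $\varphi_n\colon G\to H$ restricting to the identity on a prescribed finitely generated subgroup of $H$ --- is exactly the paper's Lemma \ref{lemme2} and is correct. The problem is everything after that. Your argument rests entirely on the claim that ``$g$ is generalized loxodromic in $G$'' (hence that $G$ is acylindrically hyperbolic) is certified by finitely many elements satisfying finitely many equations and inequations together with the unsolvability of finitely many ``obstructing'' systems. No such characterization is known, and you do not prove one; you explicitly defer it as ``the hard part'' and describe it as a re-packaging of the Dahmani--Guirardel--Osin machinery. But that re-packaging \emph{is} the content of the theorem: if acylindrical hyperbolicity of a finitely generated group were equivalent to a finite Boolean combination of solvability conditions for systems of equations over a tuple of constants, the transfer under existential closedness would be immediate. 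As written, your key lemma is a restatement of the statement to be proved, so the argument is circular. Note also that the obstructions you list (centralizer conditions, coincidences between powers of $g$ and of its conjugates) are at best necessary; WPD and acylindricity are genuinely geometric conditions on an action, not on the abstract subgroup $\langle g\rangle$, and there is no theorem asserting that the absence of such configurations suffices for an infinite-order element of an arbitrary finitely generated group to be generalized loxodromic.

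The paper fills this gap geometrically rather than algebraically. Fix a non-elementary acylindrical action of $H$ on a hyperbolic space $X$ and consider the scaling factors $\lambda_n=\inf_{x\in X}\max_{s\in S}d(x,\varphi_n(s)x)$. If $\lambda_n\to\infty$ along the ultrafilter, the rescaled asymptotic cone is a real tree on which $G$ acts non-trivially; the Groves--Hull Rips machine then yields a splitting of $G$ over a virtually abelian subgroup $C$, which is shown to be weakly malnormal (via the maximal virtually abelian subgroup containing $C$ and the fact that $G\supset H$ is not virtually abelian), and Minasyan--Osin gives acylindrical hyperbolicity. If the $\lambda_n$ stay bounded, the unrescaled ultraproduct of $X$ is a hyperbolic space on which $G$ acts acylindrically (acylindricity transfers because the $\varphi_n$ are discriminating) and non-elementarily (because two independent loxodromic elements of $H$ are fixed by every $\varphi_n$). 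Your proposal contains no substitute for either branch of this dichotomy, and in particular never addresses the divergent case, which is where the real work lies.
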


\begin{rque}
Note that, in Theorems \ref{above} and \ref{above2}, the subgroup $H$ is not assumed to be finitely generated.
\end{rque}

These results follow from the well-known Rips machine, adapted by Groves and Hull to acylindrically hyperbolic groups in \cite{GH19}.

\smallskip

Note that Theorems \ref{above} and \ref{above2} do not remain true if one removes the assumption that the subgroup $H$ is existentially closed. Indeed, acylindrical hyperbolicity is not inherited by overgroups in general. As an example, $H=F_2$ is acylindrically hyperbolic, but the group $G=F_2\times \mathbb{Z}$ is not acylindrically hyperbolic. In this example, one easily sees that $H=F_2$ is not existentially closed in $G$: let $h_1$ and $h_2$ be two non-commuting elements of $H$. The only element of $H$ that commutes both with $h_1$ and $h_2$ is the trivial element. Hence, the following system of equations and inequations has a solution in $G$ but not in $H$: \[([x,h_1]=1)\wedge ([x,h_2]=1)\wedge (x\neq 1).\]

\smallskip

In addition, we construct in Section \ref{counter} an elementary embedding of the free group $F_2$ into a (necessarily not finitely generated) group that is not acylindrically hyperbolic, which shows that the property that $G$ is finitely generated cannot be omitted from Theorem \ref{above2}, even if we replace the condition that $H$ is existentially closed with the stronger condition that $H$ is elementarily embedded into $G$.



\smallskip

One says that two groups $G$ and $H$ are \emph{existentially equivalent} if they satisfy the same existential first-order sentences. In general, acylindrical hyperbolicity is not preserved under existential equivalence, even among finitely presented groups. For instance, $H=F_2\times \mathbb{Z}$ is existentially closed in $G=H\ast\mathbb{Z}$ as a consequence of Lemma \ref{lemme1} below (since there exists a discriminating sequence of retractions $(\varphi_n : G \rightarrow H)_{n\in \mathbb{N}}$). In particular, $H$ and $G$ are existentially equivalent. But $G$ is acylindrically hyperbolic and $H$ is not acylindrically hyperbolic. However, we deduce the following result from Theorem \ref{above} (see Section \ref{section_coro} for details).

\begin{co}\label{coro}
Let $G$ be a finitely presented group. If $G$ is existentially equivalent to the mapping class group $\mathrm{Mod}(\Sigma_g)$ of a closed orientable surface $\Sigma_g$ of genus $g\geq 4$, then there is an embedding $i : \mathrm{Mod}(\Sigma_g) \hookrightarrow G$ such that $i(\mathrm{Mod}(\Sigma_g))$ is existentially closed in $G$. Therefore, $G$ is acylindrically hyperbolic (by Theorem \ref{above}). The same result is true if one replaces $\mathrm{Mod}(\Sigma_g)$ with one of the following groups: $\mathrm{Out}(F_n)$ and $\mathrm{Aut}(F_n)$ for $n\geq 2$, or the Higman group.\end{co}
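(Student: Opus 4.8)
The plan is to deduce Corollary~\ref{coro} from Theorem~\ref{above}. Write $H$ for the relevant group: $\mathrm{Mod}(\Sigma_g)$ with $g\geq 4$, or $\mathrm{Out}(F_n)$ or $\mathrm{Aut}(F_n)$ with $n\geq 2$, or the Higman group. Each such $H$ is finitely presented and acylindrically hyperbolic (recalled or cited), and $G$ is finitely presented, so it suffices to exhibit an embedding $i\colon H\hookrightarrow G$ with $i(H)$ existentially closed in $G$. By Lemma~\ref{lemme1} it is in fact enough to produce, together with $i$, a \emph{discriminating sequence of retractions} $\varphi_n\colon G\to i(H)$: homomorphisms with $\varphi_n\circ i=\mathrm{id}$ such that for every $g\in G\setminus\{1\}$ one has $\varphi_n(g)\neq 1$ for all large $n$.

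First I would extract approximate versions of $i$ and of the $\varphi_n$ from existential equivalence alone. Existential equivalence is symmetric, so $G$ and $H$ satisfy the same existential (equivalently, the same universal) sentences. Fix finite presentations $G=\langle\bm{x}\mid\bm{r}\rangle$ and $H=\langle\bm{h}\mid\bm{s}\rangle$. For any finite set $S\subseteq G\setminus\{1\}$, the existential sentence asserting that the relators $\bm{r}$ all vanish and the chosen elements of $S$ are all nontrivial holds in $G$, hence in $H$, producing a homomorphism $G\to H$ injective on $S$. Letting $S$ run over the balls $B_G(n)\setminus\{1\}$ (here $G$ is finitely generated) we obtain a \emph{discriminating sequence} $\chi_n\colon G\to H$; running the same argument with the presentation of $H$ gives a discriminating sequence $\psi_n\colon H\to G$. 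Thus $G$ is discriminated by $H$ and $H$ is discriminated by $G$.

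The heart of the matter is to upgrade this to honest data, and here I would feed the $\chi_n$ into the Makanin--Razborov / Rips-machine analysis for acylindrically hyperbolic targets of Groves--Hull --- the tool underlying Theorems~\ref{above} and~\ref{above2} --- which applies because $H$ is acylindrically hyperbolic and \emph{equationally Noetherian}; the latter is known for each of our groups (for $\mathrm{Mod}(\Sigma_g)$, for $\mathrm{Out}(F_n)$ and $\mathrm{Aut}(F_n)$, and for the Higman group, with suitable references). For $n$ large the $\chi_n$ factor through no proper Makanin--Razborov quotient of $G$ (their kernels meet $B_G(n)$ trivially), so, modulo the modular automorphisms of $G$ and twists in $H$, they are among finitely many fixed homomorphisms; a shortening/descent argument --- terminating thanks to the equational Noetherianity of $H$ and the finite presentation of $G$ --- combined with the facts that $H$ is \emph{co-Hopfian} and is rigid in the appropriate sense (its Makanin--Razborov diagram over virtually cyclic subgroups is trivial or tightly controlled) then forces a copy of $H$ to sit inside $G$. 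Applying the analogous analysis to the $\psi_n$ --- which one may take to land inside this copy, so that the analysis takes place inside $\mathrm{Hom}(H,H)$, where the target is again equationally Noetherian --- and using co-Hopficity to absorb the resulting ambient automorphism of $H$ identifies this copy with a retract $i(H)$ of $G$ and converts the $\chi_n$ into the required discriminating sequence $\varphi_n$. The exceptional cases $\mathrm{Out}(F_2)=\mathrm{GL}_2(\mathbb{Z})$ and $\mathrm{Aut}(F_2)$, where $H$ is virtually free, have to be treated by hand, using that they are nonetheless finitely presented, acylindrically hyperbolic, equationally Noetherian and co-Hopfian.

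I expect the upgrading in the previous paragraph to be the main obstacle: existential equivalence and discrimination alone are far from enough to conclude (for instance $\mathbb{Z}^2$ is discriminated by $\mathbb{Z}$ and existentially equivalent to it, yet does not embed into it), so one genuinely needs the geometric input --- the equational Noetherianity and co-Hopficity of $H$, the control on its Makanin--Razborov diagram, and the finite presentability of $G$ --- to force $H$ to embed, as a retract, into $G$. By contrast the reduction to Theorem~\ref{above} via Lemma~\ref{lemme1}, and the construction of the discriminating sequences $\chi_n$ and $\psi_n$, are routine. It is also at this point that the hypotheses $g\geq 4$ and $n\geq 2$ enter, precisely to guarantee the equational Noetherianity, co-Hopficity and rigidity statements invoked for $\mathrm{Mod}(\Sigma_g)$, $\mathrm{Out}(F_n)$ and $\mathrm{Aut}(F_n)$.
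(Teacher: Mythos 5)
Your reduction to Theorem \ref{above} via Lemma \ref{lemme1} and your construction of discriminating sequences $\chi_n:G\to H$ and $\psi_n:H\to G$ from existential equivalence and finite presentability are exactly right and match the paper. But the step you yourself single out as the heart of the matter is a genuine gap. You propose to run a Makanin--Razborov/shortening analysis of $\mathrm{Hom}(G,H)$, and this rests on the assertion that equational Noetherianity ``is known for each of our groups, with suitable references''. It is not: the paper only records that Groves has \emph{announced} it for mapping class groups, and it is open for $\mathrm{Out}(F_n)$, $\mathrm{Aut}(F_n)$ ($n\geq 3$) and the Higman group (indeed the introduction stresses that acylindrically hyperbolic groups need not be equationally Noetherian). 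Even granting that input, there is no developed MR/shortening theory over these targets that delivers ``finitely many homomorphisms up to modular automorphisms, hence a copy of $H$ inside $G$''; and ordinary co-Hopficity is not the property that closes the argument.

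The idea you are missing is Ould Houcine's \emph{strongly co-Hopfian} property: there is a \emph{finite} set $S\subset H\setminus\{1\}$ such that every endomorphism of $H$ whose kernel avoids $S$ is an automorphism. This is verified case by case for $\mathrm{Mod}(\Sigma_g)$, $g\geq 4$ (Aramayona--Souto), $\mathrm{Aut}(F_n)$, $n\geq 2$ (Khramtsov: non-automorphisms have finite image), $\mathrm{Out}(F_n)$, $n\geq 3$ (extracted from Bridson--Vogtmann), and the Higman group (Martin); only $\mathrm{Out}(F_2)\cong\mathrm{GL}_2(\mathbb{Z})$ needs separate comment (it is virtually free, so the hyperbolic case applies). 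Given this, no geometric machinery is needed at all: pick $n_0$ with $\ker(\psi_{n_0})\cap S=\varnothing$; since $(\chi_n)$ is discriminating, for $n$ large the endomorphism $\chi_n\circ\psi_{n_0}$ of $H$ kills nothing in $S$, hence is an automorphism $\alpha_n$. Thus $i:=\psi_{n_0}$ is injective, and $(i\circ\alpha_n^{-1}\circ\chi_n)_n$ is a discriminating sequence of retractions of $G$ onto $i(H)$, so $i(H)$ is existentially closed in $G$ by Lemma \ref{lemme1} and Theorem \ref{above} finishes the proof. This composition trick replaces your entire third paragraph.
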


\smallskip

The acylindrical hyperbolicity of $\mathrm{Mod}(\Sigma_g)$ was proved in \cite{Bow08}. For $\mathrm{Out}(F_n)$, $\mathrm{Aut}(F_n)$ and the Higman group, acylindrical hyperbolicity was proved respectively in \cite{BF09}, \cite{GH20} and \cite{Mar17}. Recall that the Higman group was constructed in \cite{Hig51} as the first example of a finitely presented infinite group without non-trivial finite quotients. It is given by the following presentation: \[H=\langle a_1,a_2,a_3,a_4 \ \vert \ a_ia_{i+1}a_i^{-1}=a_{i+1}^2, \ i\in \mathbb{Z}/4\mathbb{Z}\rangle.\]

\smallskip

In \cite{GH19}, in the paragraph below Definition 3.1, Groves announced that the mapping class group of a surface of finite type is equationally Noetherian. By Theorem \ref{above2}, this result implies that one can replace `finitely presented' with `finitely generated' in the previous statement.

\begin{co}
Let $G$ be a finitely generated group. If $G$ is existentially equivalent to $\mathrm{Mod}(\Sigma_g)$ for $g\geq 4$, then $G$ is acylindrically hyperbolic.
\end{co}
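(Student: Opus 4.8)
The plan is to run the proof of Corollary \ref{coro} with $\mathrm{Mod}(\Sigma_g)$ in the role of the reference group, observing that the only place where finite presentability of $G$ is used there is the final invocation of Theorem \ref{above}, and to replace that invocation by an application of Theorem \ref{above2}. This replacement is legitimate precisely because $\mathrm{Mod}(\Sigma_g)$ is now known to be equationally Noetherian, by the result of Groves announced in \cite{GH19} (and it is acylindrically hyperbolic for $g \geq 4$ by \cite{Bow08}), so that the hypotheses of Theorem \ref{above2} will be met with $G$ merely finitely generated rather than finitely presented.

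Concretely I would proceed in three steps. First, starting from the hypothesis that $G$ is existentially equivalent to $\mathrm{Mod}(\Sigma_g)$, I would reproduce the construction from the proof of Corollary \ref{coro} of an embedding $i : \mathrm{Mod}(\Sigma_g) \hookrightarrow G$ whose image is existentially closed in $G$, and verify line by line that this construction uses only the existential equivalence together with intrinsic properties of $\mathrm{Mod}(\Sigma_g)$ (its finite presentation, its equational Noetherianness, its acylindrical hyperbolicity), and that finite presentability of $G$ enters Corollary \ref{coro} exclusively through Theorem \ref{above} — hence the construction remains valid when $G$ is only finitely generated. Second, I would record that $H := i(\mathrm{Mod}(\Sigma_g)) \leq G$ is acylindrically hyperbolic by \cite{Bow08}, equationally Noetherian by \cite{GH19}, and existentially closed in $G$ by the first step. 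Third, since $G$ is finitely generated, Theorem \ref{above2} applied to the pair $H \leq G$ yields that $G$ is acylindrically hyperbolic, which is the claim.

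The main obstacle is the first step: one must genuinely return to the proof of Corollary \ref{coro} and pinpoint the role played there by finite presentability of $G$, confirming that it is confined to the appeal to Theorem \ref{above}. A tempting shortcut — embed $\mathrm{Mod}(\Sigma_g)$ into an ultrapower $\prod_{\mathcal U} G$ using the existential equivalence, and then descend to one of the factors $G$ — is not immediate, because equational Noetherianness of $\mathrm{Mod}(\Sigma_g)$ controls solution sets of systems of equations inside $\mathrm{Mod}(\Sigma_g)$ rather than logical implications between them, so some work is needed to ensure that the homomorphism $\mathrm{Mod}(\Sigma_g) \to G$ produced from finitely many relations is injective with existentially closed image (for instance, using equational Noetherianness to reduce injectivity to finitely many inequations, and a discriminating sequence of retractions as in the proof of Lemma \ref{lemme1} for existential closedness). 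Once the first step is settled, steps two and three are a routine transcription of Corollary \ref{coro} with Theorem \ref{above2} in place of Theorem \ref{above}.
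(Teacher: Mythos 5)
Your overall strategy is exactly the paper's: the new input is that $\mathrm{Mod}(\Sigma_g)$ is equationally Noetherian (Groves's announcement recorded in \cite{GH19}), and this is what lets you substitute Theorem \ref{above2} for Theorem \ref{above} at the end of the argument of Corollary \ref{coro}. Your steps two and three are correct as written.

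The one point where your plan would not survive the ``line by line'' verification you propose is the claim that finite presentability of $G$ enters Corollary \ref{coro} \emph{exclusively} through Theorem \ref{above}. It does not: the proof of Lemma \ref{prop} opens by producing a discriminating sequence $(\varphi_n : G \rightarrow H)_{n\in\mathbb{N}}$ ``as in the proof of Lemma \ref{lemme1}'', and that construction encodes the finite relation set $R(\bm{s})=1$ of $G$ into an existential sentence to be transferred to $H$ --- this is a second, independent use of the finite presentability of $G$. If $G$ is merely finitely generated, its relation set may be infinite and that sentence cannot be written down. The repair is the same equational Noetherianness of $H=\mathrm{Mod}(\Sigma_g)$ that you already have in hand: as explained just before Lemma \ref{lemme2}, there is a finite subsystem $R_0\subset R$ such that solutions in $H$ of $R_0(\bm{x})=1$ already give homomorphisms $G\rightarrow H$, so the discriminating sequence $(\varphi_n)$ still exists. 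This is precisely the content of the Remark following Lemma \ref{prop} (``the lemma remains true if $G$ is finitely generated and $H$ is equationally Noetherian''), which is the citation your first step actually needs. With that substitution made, your argument is complete and coincides with the paper's.
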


Last, it is worth mentioning a result proved recently by Bogopolski in \cite{Bog18}. A subgroup $H$ of a group $G$ is said to be \emph{verbally closed} if, for any $w(x_1,\ldots,x_n)\in F_n$ and $h\in H$, the equation $w(x_1,\ldots,x_n)=h$ has a solution in $H$ if and only if it has a solution in $G$. Bogopolski proved the following theorem: let $G$ be a finitely presented group, and let $H$ be a finitely generated, acylindrically hyperbolic subgroup of $G$. Suppose in addition that $H$ has no non-trivial normal finite subgroup. Then $H$ is verbally closed in $G$ if and only if $H$ is a retract of $G$. Moreover, if $H$ is equationally Noetherian, one can simply assume that $G$ is finitely generated. Of course, this result does not imply that $G$ is acylindrically hyperbolic, since for instance $F_2$ is a retract of $F_2\times\mathbb{Z}$, which is not acylindrically hyperbolic.



\subsection*{Acknowledgments}I warmly thank Denis Osin for very useful comments on an earlier version of this paper.

\section{Acylindrically hyperbolic groups}

The following definition was introduced by Bowditch in \cite{Bow08}.

\begin{de}An action of a group $G$ by isometries on a metric space $(X,d)$ is called \emph{acylindrical} if for every $\varepsilon \geq 0$ there exist $N>0$ and $R>0$ such that for every two points $x,y \in X$ satisfying $d(x,y) \geq R$, there are at most $N$ elements $g\in G$ such that
    \begin{align*}
        d(x,gx)\leq \varepsilon \ \text{ and } \ d(y,gy)\leq \varepsilon.
    \end{align*}
    \end{de}
    
We recall the following classical definitions.

\begin{de}Let $(X,d)$ be a $\delta$-hyperbolic metric space, and let $G$ be a group acting on $X$ by isometries. An element $g\in G$ is called \emph{elliptic} if some (equivalently, any) orbit of $g$ is bounded, and \emph{loxodromic} if the map $\mathbb{Z}\rightarrow X : n \mapsto g^nx$ is a quasi-isometry for some (equivalently, any) $x\in X$. Every loxodromic element $g\in G$ has exactly two limit points $g^{+\infty}$ and $g^{-\infty}$ on the Gromov boundary $\partial_{\infty}X$. Two loxodromic elements $g,h\in G$ are called \emph{independent} if the sets $\lbrace g^{\pm\infty}\rbrace$ and $\lbrace h^{\pm\infty}\rbrace$ are disjoint.\end{de} 

If a group $G$ admits an acylindrical action on a $\delta$-hyperbolic metric space, then $G$ satisfies one of the following three conditions (see \cite[Theorem 1.1]{Osi16}).
\begin{enumerate}
    \item $G$ is elliptic, that is every element $g\in G$ is elliptic.
    \item $G$ is virtually cyclic and contains a loxodromic element.
    \item $G$ contains two (equivalently, infinitely many) pairwise independent loxodromic elements. In this case, one says that $G$ is \emph{acylindrically hyperbolic}.
\end{enumerate}
In the first two cases, one says that the action of $G$ is \emph{elementary}. Note that every group has an elementary acylindrical action on a hyperbolic space, namely the trivial action on a point. For this reason, the third condition is the only one of interest; in this case, one says that the action of $G$ is \emph{non-elementary}.

\section{Proof of Theorems \ref{above} and \ref{above2}}


Given two groups $G$ and $H$, a sequence of morphisms $(\varphi_n)_{n\in\mathbb{N}}\in\mathrm{Hom}(G,H)^{\mathbb{N}}$ is said to be \emph{discriminating} if the following condition holds: for every finite subset $B\subset G\setminus\lbrace 1\rbrace$, there exists an integer $n_B$ such that $\ker(\varphi_n)\cap B=\varnothing$ for every $n\geq n_B$.

\begin{lemme}\label{lemme1}Let $G$ be a finitely presented group, and let $H$ be a subgroup of $G$. The following assertions are equivalent.
\begin{enumerate}
    \item $H$ is existentially closed in $G$.
    \item For every finitely generated subgroup $H'\subset H$, there exists a discriminating sequence $(\varphi_n : G\twoheadrightarrow H)_{n\in\mathbb{N}}$ such that ${\varphi_n}_{\vert H'}=\mathrm{id}_{H'}$.
\end{enumerate}
\end{lemme}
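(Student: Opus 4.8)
The plan is to prove the two implications separately; $(2)\Rightarrow(1)$ is soft and uses only that $H$ embeds into $G$ together with the discriminating maps, while $(1)\Rightarrow(2)$ is where finite presentability of $G$ enters, and essentially so. For $(2)\Rightarrow(1)$ I would argue as follows. After rewriting, an existential formula $\phi(\bm x)$ is a disjunction of systems of equations and inequations; fix $\bm h\in H^p$ and let $H'\leq H$ be generated by the coordinates of $\bm h$. If $\phi(\bm h)$ holds in $H$ it holds in $G$ because $H\hookrightarrow G$ (equations are preserved, and inequations too since the embedding is injective). Conversely, if $\phi(\bm h)$ holds in $G$, pick a solution $\bm g\in G^q$ of one disjunct, say $\bigwedge_i w_i(\bm g,\bm h)=1\wedge\bigwedge_j v_j(\bm g,\bm h)\neq 1$, and apply a discriminating sequence $\varphi_n:G\twoheadrightarrow H$ with ${\varphi_n}_{\vert H'}=\mathrm{id}_{H'}$ provided by $(2)$: then $w_i(\varphi_n(\bm g),\bm h)=\varphi_n(w_i(\bm g,\bm h))=1$ for all $n$, and, with $B=\{v_j(\bm g,\bm h)\}_j\setminus\{1\}\subset G\setminus\{1\}$, also $v_j(\varphi_n(\bm g),\bm h)\neq 1$ once $n\geq n_B$. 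So $\varphi_n(\bm g)\in H^q$ solves the disjunct in $H$, hence $\phi(\bm h)$ holds in $H$. Note this direction needs neither finite presentability nor surjectivity of the $\varphi_n$.

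For $(1)\Rightarrow(2)$ the idea is to package everything into a single existential formula with parameters in $H'$. I would fix a finite presentation $G=\langle s_1,\dots,s_k\mid r_1,\dots,r_m\rangle$, a finitely generated $H'=\langle h_1,\dots,h_\ell\rangle\leq H$ together with words $u_i$ such that $h_i=u_i(\bm s)$ in $G$, and (using that $G$ is countable) an enumeration $G\setminus\{1\}=\{b_1,b_2,\dots\}$ with words $W_r$ such that $b_r=W_r(\bm s)$. For each $n$, let $\psi_n(\bm y)$ be the existential formula
\[
\exists x_1\cdots\exists x_k\ \Big(\bigwedge_{j=1}^m r_j(\bm x)=1\Big)\wedge\Big(\bigwedge_{i=1}^\ell u_i(\bm x)=y_i\Big)\wedge\Big(\bigwedge_{r=1}^n W_r(\bm x)\neq 1\Big).
\]
Then $\psi_n(h_1,\dots,h_\ell)$ holds in $G$, witnessed by $\bm x=\bm s$: the relators vanish, $u_i(\bm s)=h_i$ by the choice of $u_i$, and $W_r(\bm s)=b_r\neq 1$.

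Since $H$ is existentially closed in $G$, $\psi_n(h_1,\dots,h_\ell)$ holds in $H$, witnessed by some $\bm g^{(n)}=(g_1^{(n)},\dots,g_k^{(n)})\in H^k$: the vanishing of the relators means that $s_i\mapsto g_i^{(n)}$ defines a homomorphism $\varphi_n:G\to H$; the equalities $u_i(\bm g^{(n)})=h_i$ give ${\varphi_n}_{\vert H'}=\mathrm{id}_{H'}$; and $\varphi_n(b_r)=W_r(\bm g^{(n)})\neq 1$ for $r\leq n$. Since any finite $B\subset G\setminus\{1\}$ lies in $\{b_1,\dots,b_N\}$ for some $N$, we get $\ker\varphi_n\cap B=\varnothing$ for all $n\geq N$, i.e.\ $(\varphi_n)_n$ is discriminating. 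To obtain the surjectivity recorded in the statement, I would run this construction with $H'$ enlarged to include a finite generating set of $H$: then $\varphi_n$ is the identity on a generating set of $H$, hence surjective, and a fortiori the identity on $H'$.

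The step I expect to carry the real content is the encoding in $(1)\Rightarrow(2)$: compressing ``$\bm x$ defines a homomorphism $G\to H$ fixing $H'$ and injective on $b_1,\dots,b_n$'' into a single existential formula with parameters in $H$. This is possible precisely because $G$ has finitely many defining relators; for a merely finitely generated $G$ one must first cut the relators down to finitely many, which is exactly where equational Noetherianity is invoked in Theorem~\ref{above2}. Everything else is routine bookkeeping.
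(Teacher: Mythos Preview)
Your argument is correct and matches the paper's: for $(2)\Rightarrow(1)$ you push a $G$-solution through $\varphi_n$ for large $n$, and for $(1)\Rightarrow(2)$ you encode ``$\bm x$ defines a homomorphism $G\to H$ fixing the chosen generators of $H'$ and not killing the first $n$ nontrivial elements of $G$'' as a single existential system, solvable by $\bm s$ in $G$ and hence in $H$. The only cosmetic difference is that the paper enumerates $G\setminus\{1\}$ via word-metric balls rather than an arbitrary listing.

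One point does not go through: your final sentence proposes to secure the surjectivity recorded by the symbol $\twoheadrightarrow$ by enlarging $H'$ to contain a finite generating set of $H$, but $H$ is not assumed to be finitely generated (the paper stresses this in the remark following Theorems~\ref{above} and~\ref{above2}); indeed, if $H$ is not finitely generated then no homomorphism from the finitely generated group $G$ can be onto $H$. The paper's own proof does not establish surjectivity either, and surjectivity is never used downstream: in the proof of Theorems~\ref{above} and~\ref{above2} only the conditions $\varphi_n(h_1)=h_1$, $\varphi_n(h_2)=h_2$ and the discriminating property are invoked. So the twoheaded arrow should be read as inessential, and you can simply drop that last step.
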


\begin{proof}We first prove that \emph{(2)} implies \emph{(1)}. Let $\theta(\bm{x},\bm{h})$ be a quantifier-free formula with constants from $H$. Let $H'$ be the subgroup of $H$ generated by $\bm{h}$. By assumption, there exists a discriminating sequence $(\varphi_n : G\twoheadrightarrow H)_{n\in\mathbb{N}}$ such that ${\varphi_n}_{\vert H'}=\mathrm{id}_{H'}$. Suppose that there exists a tuple $\bm{g}$ of elements of $G$ such that $\theta(\bm{g},\bm{h})$ holds in $G$. For $n$ sufficiently large, $\theta(\varphi_n(\bm{g}),\bm{h})$ holds in $H$.

\smallskip

Now, let us assume that $H$ is existentially closed in $G$. Let $\langle \bm{s} \ \vert \ R(\bm{s})=1\rangle$ be a finite presentation of $G$. Let $H'$ be a finitely generated subgroup of $H$ and let $\lbrace h_1,\ldots, h_k\rbrace$ be a generating set of $H'$. We denote by $B_n=\lbrace b_1,\ldots,b_{N(n)}\rbrace\subset G$ the ball of radius $n$ for the metric induced by $\bm{s}$, with $b_1=1$. Every element $h_i$ (resp.\ $b_j$) can be written as a word $w_i(\bm{s})$ (resp.\ $v_j(\bm{s})$). The following system of equations and inequations over $H$ has a solution in $G$, namely $\bm{x}=\bm{s}$: \[(R(\bm{x})=1)\wedge\left(\bigwedge_{i=1}^kh_i=w_i(\bm{x})\right)\wedge\left(\bigwedge_{j=2}^{N(n)}b_j(\bm{x})\neq 1\right).\]Since $H$ is existentially closed in $G$, this system has a solution $\bm{t}$ in $H$ as well. The morphism $\varphi_n : G\rightarrow H : \bm{s}\mapsto \bm{t}$ is well-defined, does not kill any element of $B_n\setminus\lbrace 1\rbrace$, and maps $h_i=w_i(\bm{s})$ to $h_i=w_i(\bm{t})$.\end{proof}

The following lemma can be proved in a similar way. Indeed, given a presentation of $G$ of the form $\langle \bm{s} \ \vert \ R(\bm{s})=1\rangle$, with $R$ possibly infinite, the equational Noetherian property satisfied by $H$ has the following consequence: there exists a finite subset of relations $R_0\subset R$ such that $\mathrm{Hom}(G,H)$ is in one-to-one correspondence with the set of solutions in $H$ of the system of equations $R_0(\bm{x})=1$.

\begin{lemme}\label{lemme2}Let $G$ be a finitely generated group, and let $H$ be an equationally Noetherian subgroup of $G$. The following assertions are equivalent.
\begin{enumerate}
    \item $H$ is existentially closed in $G$.
    \item For every finitely generated subgroup $H'\subset H$, there exists a discriminating sequence $(\varphi_n : G\twoheadrightarrow H)_{n\in\mathbb{N}}$ such that ${\varphi_n}_{\vert H'}=\mathrm{id}_{H'}$.
\end{enumerate}
\end{lemme}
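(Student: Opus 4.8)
The statement to prove is Lemma \ref{lemme2}: for $G$ finitely generated and $H$ an equationally Noetherian subgroup, $H$ is existentially closed in $G$ iff for every finitely generated $H'\subset H$ there is a discriminating sequence $(\varphi_n:G\twoheadrightarrow H)$ restricting to the identity on $H'$.

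The plan is to mimic the proof of Lemma \ref{lemme1}, replacing the finite presentation of $G$ by the finite subsystem of relations produced by the equational Noetherian property of $H$. The direction $(2)\Rightarrow(1)$ should be verbatim the same as in Lemma \ref{lemme1}: given a quantifier-free formula $\theta(\bm x,\bm h)$ with constants in $H$, put $H'=\langle\bm h\rangle$, take a discriminating sequence $(\varphi_n)$ fixing $H'$, and observe that if $\theta(\bm g,\bm h)$ holds in $G$ then $\theta(\varphi_n(\bm g),\bm h)$ holds in $H$ for $n$ large, because the discriminating property ensures none of the finitely many inequations appearing in $\theta$ gets killed.

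For $(1)\Rightarrow(2)$ I would argue as follows. Fix a generating tuple $\bm s$ of $G$ and write $G=\langle \bm s \mid R(\bm s)=1\rangle$ where $R$ is a (possibly infinite) set of relators. Morphisms $G\to H$ correspond bijectively to tuples $\bm x$ in $H$ satisfying $R(\bm x)=1$. Because $H$ is equationally Noetherian, there is a \emph{finite} subset $R_0\subset R$ such that the solution set in $H$ of $R_0(\bm x)=1$ equals the solution set of the whole system $R(\bm x)=1$; hence $\mathrm{Hom}(G,H)$ is in bijection with the solutions in $H$ of the finite system $R_0(\bm x)=1$. This finite system $R_0$ now plays the role that the finite presentation played in Lemma \ref{lemme1}. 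Then, exactly as there: let $H'=\langle h_1,\dots,h_k\rangle$, let $B_n=\{b_1=1,b_2,\dots,b_{N(n)}\}$ be the ball of radius $n$ in $G$ with respect to $\bm s$, express each $h_i$ and each $b_j$ as words $w_i(\bm s)$, $v_j(\bm s)$, and consider the existential sentence with constants in $H$
\[
(R_0(\bm x)=1)\wedge\Bigl(\bigwedge_{i=1}^k h_i=w_i(\bm x)\Bigr)\wedge\Bigl(\bigwedge_{j=2}^{N(n)} v_j(\bm x)\neq 1\Bigr).
\]
It has the solution $\bm x=\bm s$ in $G$ (here we use that $G$ satisfies $R\supseteq R_0$ and that $b_j\neq 1$ in $G$ for $j\geq 2$ by definition of the ball). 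Since $H$ is existentially closed in $G$, it has a solution $\bm t$ in $H$; since $\bm t$ satisfies $R_0(\bm x)=1$ it satisfies the full $R(\bm x)=1$, so $\varphi_n:\bm s\mapsto\bm t$ is a well-defined homomorphism $G\to H$, it is surjective because its image contains $H'$ and one can also throw the generators of $H$... more simply, one arranges surjectivity exactly as in Lemma \ref{lemme1} (the cited statement asserts $\varphi_n$ surjective; the same mechanism applies), it fixes $H'$ pointwise, and it is injective on $B_n$. Letting $n\to\infty$ gives the discriminating sequence.

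The only genuinely new ingredient compared to Lemma \ref{lemme1} is the reduction from the infinite relator set $R$ to a finite subset $R_0$ via equational Noetherianity of $H$ — this is precisely the consequence spelled out in the paragraph preceding the lemma, and it is what lets us write a genuine existential (hence finite) formula even though $G$ is only assumed finitely generated, not finitely presented. I expect this to be the only point requiring care: one must be slightly attentive that equational Noetherianity is about solution sets \emph{in $H$}, so the bijection $\mathrm{Hom}(G,H)\leftrightarrow\{\bm x\in H^{\lvert\bm s\rvert}: R_0(\bm x)=1\}$ is asserted only over $H$, which is exactly what is needed since our unknown tuple $\bm t$ lives in $H$. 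Everything else is a routine transcription of the proof of Lemma \ref{lemme1}, so I would simply say "the proof is identical to that of Lemma \ref{lemme1}, replacing the finite set of relators of a finite presentation of $G$ by the finite subsystem $R_0\subset R$ provided by the equational Noetherian property of $H$."
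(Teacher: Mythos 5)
Your proposal matches the paper's argument exactly: the paper proves Lemma \ref{lemme2} by the same reduction, namely using equational Noetherianity of $H$ to replace the (possibly infinite) relator set $R$ of a presentation $\langle \bm{s}\mid R(\bm{s})=1\rangle$ by a finite subsystem $R_0$ whose solution set in $H$ coincides with that of $R$, so that $\mathrm{Hom}(G,H)$ is identified with the solutions in $H$ of $R_0(\bm{x})=1$, and then running the proof of Lemma \ref{lemme1} verbatim. Your caveat that equational Noetherianity only controls solution sets in $H$ (which is all that is needed) is exactly the right point of care.
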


We are ready to prove Theorems \ref{above} and \ref{above2}. These results follow from Groves' and Hull's paper \cite{GH19}, in which the authors generalised the well-known Rips machine to acylindrically hyperbolic groups. 


\begin{proof}[Proof of Theorem \ref{above} and Theorem \ref{above2}]
Let $G$ and $H$ be two groups as in Theorem \ref{above} or Theorem \ref{above2}. Let $S$ be a finite generating set of $G$. Let $(X,d)$ be a $\delta$-hyperbolic metric space on which the group $H$ acts acylindrically and non-elementarily. Let $h_1$ and $h_2$ be two independent loxodromic elements of $H$. By Lemma \ref{lemme1} or Lemma \ref{lemme2}, there exists a discriminating sequence $(\varphi_n : G\twoheadrightarrow H)_{n\in\mathbb{N}}$ such that ${\varphi_n}(h_1)=h_1$ and ${\varphi_n}(h_2)=h_2$. Let $\omega$ be a non-principal ultrafilter. We define the \emph{scaling factor} of the homomorphism $\varphi_n$ as follows: \[\lambda_n=\inf_{x\in X}\max_{s\in S}d(x,\varphi_n(s)x).\]We denote by $\lambda$ the $\omega$-limit of the sequence $(\lambda_n)_{n\in\mathbb{N}}$, and distinguish two cases.

\smallskip

\textbf{First case.} Suppose that $\lambda=+\infty$. Then the asymptotic cone \[(X_{\omega},d_{\omega})=\left(\prod_{n \in \mathbb{N}} (X,d/\lambda_n)\right) / \omega\] is a real tree, and $G$ acts on this tree non-trivially by isometries (see for instance \cite[Theorem 4.4]{GH19} for details). Let $T$ be a minimal and $G$-invariant subtree of $X_{\omega}$. The action of $G$ on this tree can be analysed using the Rips machine, adapted by Groves and Hull in \cite{GH19} to the setting of acylindrically hyperbolic groups. The Rips machine converts the action of $G$ on $T$ into a non-trivial splitting $G=A\ast_C B$ or $G=A\ast_C$ where $C$ is a virtually abelian group (for details, we refer the reader to \cite[Theorem 4.18, Proposition 4.19 and Lemma 5.1]{GH19}). If $C$ is finite, then $G$ is acylindrically hyperbolic (see for instance \cite[Corollaries 2.2 and 2.3]{MO15}). Now, assume that $C$ is infinite. By \cite[Lemma 5.6]{GH19}, there exists a unique maximal virtually abelian group $M$ containing $C$. This group is defined as follows: \[M=\langle \lbrace g\in G \ \vert \ \langle g,C\rangle \text{ is virtually abelian}\rbrace\rangle.\]Moreover, by \cite[Lemma 5.7]{GH19}, the group $M$ has the following key property: for every $g\in G\setminus M$, the intersection $M\cap gMg^{-1}$ is finite. As a consequence, there exists an element $g\in G$ such that the intersection of $gCg^{-1}$ and $C$ is finite (one says that $C$ is \emph{weakly malnormal} in $G$); otherwise, $M\cap gMg^{-1}$ is infinite for every element $g\in G$, which implies that $G$ coincides with $M$ and thus is virtually abelian, contradicting the fact that $G$ contains the acylindrically hyperbolic group $H$. Since $C$ is weakly malnormal, $G$ is acylindrically hyperbolic by \cite[Corollaries 2.2 and 2.3]{MO15}.

\smallskip

\textbf{Second case.} Suppose that $\lambda$ is finite. If $H$ were hyperbolic, one could prove that $\varphi_n$ is injective $\omega$-almost surely and conclude that $G=H$ (see Remark \ref{remarque} below). But this is not necessarily the case here. However, the sequence $(\varphi_n)_{n\in\mathbb{N}}$ gives rise to an action of $G$ on a $\delta$-hyperbolic space, namely the ultraproduct $(X_{\omega},d_{\omega})=\left(\prod_{n \in \mathbb{N}} (X,d)\right) / \omega$ without rescaling the metric. As observed in \cite{GH19}, Proposition 6.1, this action is acylindrical: let $\varepsilon >0$, let $R$ and $N$ be two constants given by the acylindrical action of $G$ on $X$, and let $(x_n)_{n\in\mathbb{N}}$, $(y_n)_{n\in\mathbb{N}}$ be two sequences of points of $X$ such that $d_{\omega}(x_{\omega}, y_{\omega})\geq R$. We claim that the set \[E=\lbrace g\in G \ \vert \ d_{\omega}(x_{\omega}, gx_{\omega})\leq \varepsilon \text{ and } d_{\omega}(y_{\omega}, gy_{\omega})\leq \varepsilon\rbrace\]has at most $N$ elements. Indeed, the inequalities $d_{\omega}(x_{\omega}, gx_{\omega})\leq \varepsilon$ and $d_{\omega}(y_{\omega}, gy_{\omega})\leq \varepsilon$ imply $d(x_n, \varphi_n(g)x_n)\leq \varepsilon$ and $d(y_n,\varphi_n(g)y_n)\leq \varepsilon$ $\omega$-almost surely, and it follows that $\varphi_n(E)$ has at most $N$ elements $\omega$-almost surely; since $(\varphi_n)_{n\in\mathbb{N}}$ is discriminating, one has $\vert E\vert \leq N$. Hence, the action of $G$ on the $\delta$-hyperbolic space $(X_{\omega},d_{\omega})$ is acylindrical. Of course, this result is interesting only if we can prove that this action is non-elementary. This is indeed the case. Recall that there exist two independent loxodromic elements $h_1$ and $h_2$ of $H$ such that $\varphi_n(h_1)=h_1$ and $\varphi_n(h_2)=h_2$ for every integer $n$. Therefore, the action of $\langle h_1,h_2\rangle\subset G$ on $X_{\omega}$ is non-elementary. Hence, the action of $G$ on $X_{\omega}$ is non-elementary, and $G$ is acylindrically hyperbolic.\end{proof}

\begin{rque}\label{remarque}The group $H$ being acylindrically hyperbolic, there exists a generating set $S$ of $H$ such that the Cayley graph $\Gamma(H,S)$ is hyperbolic and such that the natural action of $H$ on $\Gamma(H,S)$ is non-elementary and acylindrical, by \cite[Theorem 1.2]{Osi16}. If this generating set $S$ can be chosen finite (in other words, if $H$ is a hyperbolic group), then any non-divergent discriminating sequence $(\varphi_n : G \rightarrow H)_{n\in\mathbb{N}}$ is composed of injections $\omega$-almost surely, since $\varphi_n$ is completely determined by the image of a finite generating set of $G$ and since the graph $\Gamma(H,S)$ is locally finite.\end{rque}




\section{Applications}\label{section_coro}

In this section, we prove Corollary \ref{coro}. Recall that a group $G$ is called \emph{co-Hopfian} if any injective morphism $G\hookrightarrow G$ is bijective. In \cite{OH11}, Ould Houcine strengthened this definition, as follows.

\begin{de}A group $G$ is said to be \emph{strongly co-Hopfian} if there exists a finite subset $S\subset G\setminus \lbrace 1\rbrace$ such that, for any endomorphism $\phi$ of $G$, if $\ker(\phi)\cap S=\varnothing$ then $\phi$ is an automorphism.\end{de}

\begin{lemme}\label{prop}Let $G$ and $H$ be two finitely presented groups. Suppose that these groups are existentially equivalent. If $H$ is strongly co-Hopfian, then there exists an embedding $i : H \hookrightarrow G$ such that $i(H)$ is existentially closed in $G$.\end{lemme}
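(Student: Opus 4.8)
The plan is to use the symmetry of existential equivalence together with Lemma \ref{lemme1} (via finitely presented groups admitting discriminating sequences) to produce the embedding, and then exploit strong co-Hopficity of $H$ to upgrade a suitable discriminating sequence into an honest embedding whose image is existentially closed. First I would set up the following: since $G$ and $H$ are finitely presented and existentially equivalent, each embeds-up-to-discrimination into the other. More precisely, writing a finite presentation $\langle \bm{s} \mid R(\bm{s}) = 1\rangle$ of $G$, the existential sentence asserting that $R(\bm{x}) = 1$ has a solution together with finitely many inequations $b_j(\bm{x}) \neq 1$ (for $b_j$ ranging over a ball) holds in $G$, hence in $H$; this yields a discriminating sequence $(\psi_n : G \twoheadrightarrow H)_{n \in \mathbb{N}}$. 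Symmetrically, using a finite presentation of $H$, one gets a discriminating sequence $(\theta_n : H \twoheadrightarrow G)_{n \in \mathbb{N}}$.

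Next I would apply strong co-Hopficity. Let $S \subset H \setminus \{1\}$ be the finite subset from the definition of strongly co-Hopfian. Consider the composite $\theta_m \circ \psi_n : G \to G$ — wait, more carefully: consider $\psi_n \circ \theta_m : H \to H$. Since $(\theta_m)$ is discriminating, for $m$ large $\ker(\theta_m)$ avoids $S$; and since $(\psi_n)$ is discriminating, for $n$ large $\ker(\psi_n)$ avoids the finite set $\theta_m(S) \subset G \setminus \{1\}$. Hence for suitable $m, n$ the endomorphism $\psi_n \circ \theta_m$ of $H$ has kernel disjoint from $S$, so it is an automorphism of $H$. This forces $\theta_m : H \to G$ to be injective. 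Define $i = \theta_m$. It remains to check that $i(H)$ is existentially closed in $G$: by Lemma \ref{lemme1} it suffices to show that for every finitely generated $H' \subset i(H)$ there is a discriminating sequence $G \twoheadrightarrow i(H)$ restricting to the identity on $H'$. Transporting through the isomorphism $i$, and using that $\psi_n \circ \theta_m$ is an automorphism $\alpha$ of $H$, the sequence $(i \circ \alpha^{-1} \circ \psi_n : G \twoheadrightarrow i(H))$ works: it is discriminating because $(\psi_n)$ is and $\alpha^{-1}$, $i$ are injective, and on $i(H') $ — one checks it restricts appropriately, possibly after composing $\theta_m$ on the left with a further automorphism of $H$ absorbing $\alpha$, i.e. replacing $i$ by $i \circ \alpha^{-1}$ from the start.

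The cleanest bookkeeping is: set $i = \theta_m \circ \alpha^{-1}$ where $\alpha = \psi_n \circ \theta_m \in \mathrm{Aut}(H)$; then $\psi_n \circ i = \psi_n \circ \theta_m \circ \alpha^{-1} = \mathrm{id}_H$, wait that is a map $H \to H$, but I want $G \to i(H)$. Since $\psi_n : G \twoheadrightarrow H$ and $i : H \hookrightarrow G$ with $\psi_n \circ i = \mathrm{id}_H$, the map $\rho_n := i \circ \psi_n : G \twoheadrightarrow i(H)$ satisfies $\rho_n \circ i = i$, so $\rho_n$ restricts to the identity on $i(H)$ (in particular on any finitely generated $H' \subset i(H)$), and $(\rho_n)_n$ — wait, this is a single $n$; instead I should keep $n$ varying: for each sufficiently large $n$ (with $m$ fixed), $\psi_n \circ \theta_m$ need not be an automorphism, so I fix one good pair to define $i$ and then separately produce the discriminating sequence. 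For the discriminating sequence into $i(H)$: take $(i \circ \psi_k)_{k}$; this is discriminating (as $i$ injective and $\psi_k$ discriminating) but need not fix $i(H)$ pointwise. To fix $H'$ pointwise I instead invoke Lemma \ref{lemme1} more directly: since $i$ is an embedding onto a subgroup, to show $i(H)$ is e.c. in $G$, note $G$ and $i(H) \cong H$ are existentially equivalent (both being existentially equivalent to each other already, as $i$ is an isomorphism onto its image and existential equivalence with $G$ is preserved), and then run the argument of Lemma \ref{lemme1}'s proof of $(1) \Rightarrow (2)$ — actually that requires knowing e.c. a priori. The honest route: the finitely presented group $G$ has a discriminating sequence onto $H$ that restricts to the identity on any prescribed finitely generated subgroup of $H$ after composing with the inverse of the resulting automorphism; this is exactly the content needed, and it follows by the same ball-of-radius-$n$ argument as in Lemma \ref{lemme1}, now applied with the constants $\bm{h}$ being generators of $H'$ and the equations $h_i = w_i(\bm{x})$ transported via $\theta_m$.

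The main obstacle is the last step: producing a discriminating sequence $G \twoheadrightarrow i(H)$ that is the identity on a given finitely generated subgroup. The subtlety is that the discriminating sequence coming from existential equivalence a priori only lands in $H$ with no control on images of elements of $H$, so one must combine it with strong co-Hopficity to correct by an automorphism; care is needed because the automorphism depends on the truncation level, so the argument should fix the embedding $i$ once and for all using one good pair $(m,n)$, and then, for each finitely generated $H' \subset i(H)$, rerun the Lemma \ref{lemme1} construction (finite presentation of $G$, ball of radius $k$, constants = generators of $H'$ pulled back) to get the required sequence $\rho_k : G \twoheadrightarrow i(H)$ with $\rho_k|_{H'} = \mathrm{id}$, using existential equivalence of $G$ with $H \cong i(H)$ to solve the system in $i(H)$. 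I expect this to go through with only routine care once the role of strong co-Hopficity (guaranteeing $i$ is injective, not merely a discriminating quotient map in disguise) is pinned down.
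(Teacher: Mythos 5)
Your setup and your use of strong co-Hopficity to make $i=\theta_m$ injective are exactly the paper's argument. The gap is in the final step, and the worry that steered you away from the direct route is unfounded: once $m$ is fixed so that $\theta_m$ kills no element of $S$, the set $\theta_m(S)$ is a finite subset of $G\setminus\lbrace 1\rbrace$, so discrimination of $(\psi_n)$ gives $\ker(\psi_n\circ\theta_m)\cap S=\varnothing$ for \emph{all} sufficiently large $n$, hence $\alpha_n:=\psi_n\circ\theta_m$ is an automorphism of $H$ for all such $n$ — not merely for one good pair $(m,n)$. The paper then simply takes $\rho_n:=i\circ\alpha_n^{-1}\circ\psi_n: G\twoheadrightarrow i(H)$. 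Each $\rho_n$ is a retraction onto $i(H)$ (since $\alpha_n^{-1}\circ\psi_n\circ i=\mathrm{id}_H$), so it fixes every finitely generated subgroup of $i(H)$ pointwise; the sequence is discriminating because $(\psi_n)$ is and $i\circ\alpha_n^{-1}$ is injective; and Lemma \ref{lemme1} concludes. The fact that the correcting automorphism varies with $n$ is harmless: condition (2) of Lemma \ref{lemme1} only asks that each term of some discriminating sequence fix $H'$, not that a single automorphism work for all $n$.

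The fallback you settle on instead — rerunning the construction from the proof of Lemma \ref{lemme1} ``using existential equivalence of $G$ with $H\cong i(H)$'' — does not work as stated. The system there contains the equations $h_i=w_i(\bm{x})$ with constants $h_i\in i(H)$, and existential equivalence only transfers sentences \emph{without} constants; the ability to transfer formulas with constants from $i(H)$ is precisely the existential closedness you are trying to prove. One can repair this by replacing the constants with existentially quantified words $u_i(\bm{y})$ in generators of $H$, subject to the relations of $H$ and to $s(\bm{y})\neq 1$ for $s\in S$, but the solution in $H$ then only yields an endomorphism of $H$ with kernel avoiding $S$, and you must invoke strong co-Hopficity a second time to correct it by an automorphism — which is the argument of the previous paragraph in disguise. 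As written, your proposal leaves this step unjustified.
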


\begin{rque}This lemma remains true if $G$ is finitely generated and $H$ is equationally Noetherian, or if $H$ is finitely generated and $G$ is equationally Noetherian.\end{rque}

\begin{proof}Since $G$ and $H$ are existentially equivalent, and both are finitely presented, there exist two discriminating sequence $(\varphi_n : G \rightarrow H)_{n\in\mathbb{N}}$ and $(\psi_n : H \rightarrow G)_{n\in\mathbb{N}}$ (one can prove this fact exactly as in the proof of Lemma \ref{lemme1}). Let $S$ be the finite subset of $H\setminus\lbrace 1\rbrace$ given by the definition of the strongly co-Hopfian property. There exists an integer $n_0$ such that $\ker(\psi_{n_0})\cap S=\varnothing$. Then, for $n$ large enough, $\ker(\varphi_n\circ \psi_{n_0})\cap S=\varnothing$. As a consequence, $\varphi_n\circ \psi_{n_0}$ is an automorphism of $H$. In particular, $i:=\psi_{n_0}$ is injective. In addition, there exists a sequence $(\sigma_n)_{n\in\mathbb{N}}\in\mathrm{Aut}(H)^{\mathbb{N}}$ such that $\sigma_n\circ \varphi_n\circ i$ is the identity of $H$ for every integer $n$ large enough. If follows that $(i\circ \sigma_n\circ \varphi_n : G \rightarrow i(H))_{n\in\mathbb{N}}$ is a discriminating sequence of retractions. By Lemma \ref{lemme1}, $i(H)$ is existentially closed in $G$.\end{proof}

In order to prove Corollary \ref{coro}, it remains to explain why $\mathrm{Mod}(\Sigma_g)$, $\mathrm{Aut}(F_n)$, $\mathrm{Out}(F_n)$ and the Higman group are strongly co-Hopfian. This follows from the following facts.

\begin{itemize}
    \item[$\bullet$]For $g\geq 4$, any non-trivial endomorphism of $\mathrm{Mod}(\Sigma_g)$ is an automorphism (see \cite{AS12}, Corollary 1.4).
    \item[$\bullet$]For $n\geq 2$, any endomorphism of $\mathrm{Aut}(F_n)$ that is not an automorphism has finite image (see \cite{Khr5}).
    \item[$\bullet$]Any non-trivial endomorphism of the Higman group is an automorphism (see \cite{Mar17}, Theorem B).
    \item[$\bullet$]In \cite{BV0}, Bridson and Vogtmann proved that $\mathrm{Out}(\mathrm{Out}(F_n))$ is trivial for $n\geq 3$. Their proof contains the fact, non-explicitly stated, that $\mathrm{Out}(F_n)$ is strongly co-Hopfian for $n\geq 3$. We sketch a proof of this result below. Note that $\mathrm{Out}(F_2)$ is isomorphic to $\mathrm{GL}_2(\mathbb{Z})$, and one can prove that this group is strongly co-Hopfian; however, in this particular case, $\mathrm{Out}(F_2)$ is virtually free, thus hyperbolic, and it follows from \cite{And18} that any finitely generated group with the same $\forall\exists$-theory as $\mathrm{Out}(F_2)$ is hyperbolic.
\end{itemize}

\begin{te}[Bridson and Vogtmann]The group $\mathrm{Out}(F_n)$ is strongly co-Hopfian for $n\geq 3$.\end{te}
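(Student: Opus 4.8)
The plan is to follow the strategy implicit in Bridson and Vogtmann's proof that $\mathrm{Out}(\mathrm{Out}(F_n))$ is trivial, and extract from it the explicit finite witness set $S$ required by the definition of strong co-Hopficity. First I would recall the structural ingredients available in $\mathrm{Out}(F_n)$ for $n\geq 3$: it contains a distinguished finite subgroup, typically a symmetric group $\Sigma_{n+1}$ coming from the action on a rose or a surface, together with a supply of torsion elements (finite-order automorphisms realized by graph symmetries). The key rigidity input is that $\mathrm{Out}(F_n)$ has no non-trivial finite quotients for $n\geq 3$ (in fact it has very restricted finite quotients), and that the conjugacy classes of certain finite subgroups and of specific elements (Nielsen automorphisms, or the "standard" torsion) are recognizable. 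I would take $S$ to be a finite set of non-trivial elements chosen so that any endomorphism $\phi$ with $\ker(\phi)\cap S=\varnothing$ is forced to be injective on — and to correctly "see" — these distinguished finite subgroups.

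The core of the argument proceeds in the following steps. \emph{Step 1:} Choose $S$ to contain representatives guaranteeing that $\phi$ is non-trivial on a fixed finite subgroup $Q\leq\mathrm{Out}(F_n)$ of a size that cannot embed in any proper quotient, and more precisely that $\phi$ restricted to $Q$ is injective; since $\mathrm{Out}(F_n)$ has bounded torsion and limited finite subgroups, the image $\phi(Q)$ is then conjugate to $Q$ (using the classification of finite subgroups, or a counting/Euler-characteristic obstruction à la Bridson–Vogtmann). \emph{Step 2:} After post-composing with an inner automorphism, arrange $\phi(Q)=Q$ and analyze the induced action; the normalizer and centralizer data of $Q$ pin down $\phi$ on a generating set up to the ambient symmetry. \emph{Step 3:} Use the interaction of $Q$ with a well-chosen Nielsen generator or a "twist" to show $\phi$ sends a generating set of $\mathrm{Out}(F_n)$ to a generating set, hence $\phi$ is surjective; combined with injectivity from Step 1 extended to all of $\mathrm{Out}(F_n)$ (or using that a surjective endomorphism of a residually finite finitely generated group which is co-Hopfian — and $\mathrm{Out}(F_n)$ is co-Hopfian for $n\geq 3$ — is an automorphism), conclude $\phi\in\mathrm{Aut}(\mathrm{Out}(F_n))$.

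The main obstacle I anticipate is \emph{Step 1}: forcing $\phi$ to be injective on the chosen finite subgroup $Q$ using only finitely many inequations, and then upgrading "injective on $Q$" to "$\phi(Q)$ conjugate to $Q$". This requires the finite set $S$ to meet every kernel of every non-injective endomorphism restricted to $Q$, which is fine since $Q$ is finite, but the subtlety is that $\ker(\phi)\cap Q$ could a priori be trivial while $\phi$ still collapses $\mathrm{Out}(F_n)$ onto a small (even trivial) group — so $S$ must additionally rule out all "small-image" endomorphisms. Here one leans on the fact, used by Bridson–Vogtmann, that the finite quotients of $\mathrm{Out}(F_n)$ for $n\geq 3$ are extremely constrained (for many $n$, trivial), so that any endomorphism not killing a suitable finite-order element already has infinite image, and infinite normal subgroups of $\mathrm{Out}(F_n)$ are themselves constrained by its normal subgroup structure (the Tits-alternative-type results and the fact that $\mathrm{Out}(F_n)$ contains no proper finite-index-like infinite-quotient pathologies). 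I would therefore spend most of the effort making the choice of $S$ explicit enough that these two obstructions — collapsing $Q$, and collapsing to a small quotient — are both excluded by finitely many inequations, after which Steps 2 and 3 are a relatively routine transcription of the rigidity arguments already in the literature.
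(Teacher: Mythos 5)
Your outline has the right general flavor (finite subgroups, conjugacy rigidity, normalization by inner automorphisms), but it misses the mechanism that actually makes the argument close, and one of your stated ``key rigidity inputs'' is false. $\mathrm{Out}(F_n)$ has \emph{many} non-trivial finite quotients for $n\geq 3$ (it surjects onto $\mathrm{GL}_n(\mathbb{Z})$, hence onto $\mathrm{GL}_n(\mathbb{Z}/p)$ for every prime $p$); it is the Higman group, not $\mathrm{Out}(F_n)$, that has no non-trivial finite quotients. So the route you propose for excluding ``small-image'' endomorphisms does not exist, and your Step~3 (prove surjectivity by chasing a generating set, then invoke co-Hopficity) is left entirely unexecuted --- you correctly identify this as the main obstacle but do not resolve it. Also a small factual slip: the rose stabilizer is the signed permutation group $W_n\simeq(\mathbb{Z}/2\mathbb{Z})^n\rtimes S_n$, not $\Sigma_{n+1}$.

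The actual proof sidesteps all of this with one structural fact from Bridson--Vogtmann: $\mathrm{Out}(F_n)$ is generated by the \emph{union of two explicit finite subgroups}, the rose stabilizer $W_n$ and a second finite group $G$ containing the involution $r$. One takes $S$ to consist of all non-trivial elements of $W_n\cup G$ (plus one auxiliary infinite-order element $u^{m!}$). Then $\ker(\phi)\cap S=\varnothing$ forces $\phi$ to be injective on $W_n$ and on $G$; rigidity (subgroups isomorphic to $W_n$ are exactly the rose stabilizers, hence all conjugate, and the Bridson--Vogtmann analysis of the action on the spine pins down $\phi(G)$) lets one normalize so that $\phi(W_n)=W_n$ and $\phi(G)=G$. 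Since these are finite groups, some power $\phi^k$ is the identity on $W_n\cup G$, hence on all of $\mathrm{Out}(F_n)$ because this set generates --- giving injectivity and surjectivity simultaneously, with no need to control finite quotients, normal subgroup structure, or to argue surjectivity separately. The point you should take away is that the worry you flag (``$\phi$ injective on $Q$ but collapsing the ambient group'') is dissolved, not confronted, by choosing the finite witness set to cover two finite subgroups that together generate.
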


\begin{proof}
Let $\lbrace x_1,\ldots ,x_n\rbrace$ be an ordered generating set of $F_n$. For $i\in\llbracket 1,n\rrbracket$, let $e_i$ be the automorphism of $F_n$ that sends $x_i$ to $x_i^{-1}$ and fixes $x_j$ for $j\neq i$. For $i\in\llbracket 1,n-1\rrbracket$, let $\tau_i$ be the automorphism that interchanges $x_i$ and $x_{i+1}$ while leaving $x_j$ fixed for $j\notin \lbrace i,i+1\rbrace$. Let $W_n\simeq (\mathbb{Z}/2\mathbb{Z})^n\rtimes S_n$ be the finite subgroup of $\mathrm{Out}(F_n)$ generated by $e_i$ for $1\leq i\leq n$ and $\tau_i$ for $1\leq i\leq n-1$. The group $\mathrm{Out}(F_n)$ is generated by $W_n$ together with the involution $r$ that sends $x_1$ to $x_1x_2^{-1}$ and $x_2$ to $x_2^{-1}$ while leaving $x_i$ fixed for $i>2$. Let $G$ be the finite subgroup of $\mathrm{Out}(F_n)$ generated by $\lbrace r\rbrace\cup\lbrace \tau_1\rbrace\cup\lbrace e_i,\tau_i \ \vert \ 3\leq i\leq n\rbrace$. The group $\mathrm{Out}(F_n)$ is generated by $W_n\cup G$. The automorphism $u=re_2^{-1}$ maps $x_1$ to $x_1x_2$ and fixes $x_i$ if $i>2$. It follows that $u$ has infinite order. Define $S=(W_n\cup G \cup \lbrace u^{m!}\rbrace)\setminus \lbrace 1\rbrace$ where $m$ denotes the maximal order of a finite subgroup of $\mathrm{Out}(F_n)$ (note that this integer exists since every finite subgroup of $\mathrm{Out}(F_n)$ is isomorphic to the isometry group of a graph whose fundamental group is $F_n$). 

\smallskip

Let $\phi$ be an automorphism of $\mathrm{Out}(F_n)$ such that $\ker(\phi)\cap S=\varnothing$, and let us prove that $\phi$ is bijective. The subgroups of $\mathrm{Out}(F_n)$ that are isomorphic to $W_n$ are the stabilizers of the roses in outer space. Since the action of $\mathrm{Out}(F_n)$ is transitive on roses, each subgroup of $\mathrm{Out}(F_n)$ isomorphic to $W_n$ is conjugate to $W_n$. As a consequence, $\phi(W_n)$ is conjugate to $W_n$. Up to composing $\phi$ by an inner automorphism of $\mathrm{Out}(F_n)$, one can assume that $\phi(W_n)=W_n$. A calculation shows that the center of $W_n$ has order $2$; let $z$ be its non-trivial element. In \cite{BV0}, the authors prove by studying the action of $\phi(G)$ on the spine of outer space that $\phi(G)=G$ or $\phi(G)=G^z$. Up to composing $\phi$ with $\mathrm{ad}(z)$, one can assume that $\phi(G)=G$. Hence, one has $\phi(W_n)=W_n$ and $\phi(G)=G$. Since $W_n$ and $G$ are finite, there is a non-zero integer $k$ such that $\phi^k$ coincides with the identity on $W_n$ and on $G$. Since $\mathrm{Out}(F_n)$ is generated by $W_n\cup G$, $\phi^k$ is the identity. Therefore, $\phi$ is an automorphism.\end{proof}

\section{Counter-example among countable groups}\label{counter}

 If $G$ is any ultrapower of the free group $F_2$ with respect to a non-principal ultrafilter, then the centralizer of every element of $G$ is uncountable. Indeed, $(g_n)_{n\in\mathbb{N}}\in F_2^{\mathbb{N}}$ commutes with $(g_n^{k_n})_{n\in\mathbb{N}}$ for all sequences of integers $(k_n)_{n\in\mathbb{N}}$. This implies that $G$ is not acylindrically hyperbolic. Indeed, in an acylindrically hyperbolic group, every loxodromic element has virtually cyclic (and therefore countable) centralizer (see \cite[Lemma 6.5 and Corollary 6.6]{DGO17}). Since $G$ is elementarily equivalent to $F_2$ by \L{}o\'s theorem, this construction shows that acylindrical hyperbolicity is not preserved under elementary equivalence in general.
 
 \smallskip
 
 In fact, as observed by Osin, it is also not enough to restrict to countable groups. Indeed, let $G_0=F_2$ and let $g_1,g_2,\ldots$ be an enumeration of the non-trivial elements of $G_0$. Consider the following set of first-order formulas: $t_1(x)=\lbrace [x,g_1]=1, x\neq 1, x\neq g_1, x\neq g_1^2, \ldots\rbrace$. Since for every finite subset $t_1'(x)$ of $t_1(x)$ there exists an element $h\in G_0$ such that $G_0\models t_1'(h)$, by the compactness theorem there exists an elementary extension $G_0^1$ of $G_0$ and an element $h_1\in G_0^1$ such that $G_0^1\models t_1(h_1)$, i.e.\ $h_1$ commutes with $g_1$ but does not belong to $\langle g_1\rangle$. Iterating this operation, we get a chain of elementary extensions $G_0\prec G_0^1\prec G_0^2\prec \cdots$ such that for every $g_i\in G_0$, there exists an element $h_i\in G_0^i$ that commutes with $g_i$ but that does not belong to $\langle g_i\rangle$. Define $G_1=\cup_{i\in\mathbb{N}}G_0^i$. This group is an elementary extension of $G_0$. Continuing this process, we construct $G_0\prec G_1\prec G_2\prec \cdots$ and take the union $G'=\cup_{i\in\mathbb{N}}G_i$. Then $G'$ is a torsion-free, countable, elementary extension of $G_0$ that has no element with cyclic centralizer. Such a group cannot be acylindrically hyperbolic since the centralizer of a loxodromic element in a torsion-free acylindrically hyperbolic group is cyclic.
 
 \smallskip
 
 Thus to make Question \ref{osin} non-trivial, we have to ask it for finitely generated groups.

\renewcommand{\refname}{Bibliography}
\bibliographystyle{plain}
\bibliography{biblio}

\vspace{10mm}

\textbf{Simon André}

Vanderbilt University.

E-mail address: \href{mailto:simon.andre@vanderbilt.edu}{simon.andre@vanderbilt.edu}

\end{document}